\documentclass[12pt,reqno]{amsart}
\usepackage{amsmath,amssymb,amsthm,graphicx,url}

\DeclareGraphicsRule{*}{eps}{*}{}

\newtheorem{lemm}{Lemma}
\newtheorem{defi}{Definition}
\newtheorem{theo}{Theorem}
\theoremstyle{definition}

\DeclareMathOperator{\Imag}{Im}
\DeclareMathOperator{\Real}{Re}

\title[Symmetry of bound and antibound states]
{Symmetry of bound and antibound states\\
in the semiclassical limit for a general class
of potentials}
\author[S. Dyatlov]
{Semyon Dyatlov}
\email{dyatlov@math.berkeley.edu}
%\address{Mathematics Department, University of California \\
%Evans Hall, Berkeley, CA 94720, USA}
\author[S. Ghosh]
{Subhroshekhar Ghosh}
\email{subhro@math.berkeley.edu}
\address{Mathematics Department, University of California \\
Evans Hall, Berkeley, CA 94720, USA}
\subjclass[2010]{Primary 34L25; Secondary 65L15, 81U20}

\begin{document}

\begin{abstract}
We consider the Schr\"odinger operator $-h^2\partial_x^2+V(x)$ on a
half-line, where $V$ is a compactly supported potential which is
positive near the endpoint of its support.  We prove that the
eigenvalues and the purely imaginary resonances are symmetric up~to an
error $Ce^{-\delta/h}$.
\end{abstract}

\maketitle

\section{Introduction}

In this paper, we study spectral properties of the Schr\"odinger operator
$$
P(h)=-h^2 \partial_x^2+V(x)
$$
defined for $x$ in the half-line $(-\infty, B]$. Here $h>0$ is the
semiclassical parameter and $V(x)$ is a piecewise continuous
real-valued potential supported in $[0,B]$.

The operator $P(h)$ with the Neumann boundary condition at $B$ is
self-adjoint on $L^2(-\infty, B)$; therefore, its resolvent
$$
R_V(\lambda)=(P(h)-\lambda^2)^{-1},\
\Imag\lambda>0,
$$
is a bounded operator from $L^2$ to $H^2$ for $\lambda^2$ not in the
spectrum of $P(h)$. This resolvent can be extended meromorphically as
an operator $L^2_{\textrm{comp}}\to H^2_{\textrm{loc}}$ to $\lambda\in
\mathbb C$ with isolated poles of finite rank; these poles are called
\textbf{resonances}. (The reader is referred to \cite{TZ} for
details.) To each resonance $\lambda$ corresponds a \textbf{resonant
state}; that is, a nonzero $u\in H^2_{\textrm{loc}}(-\infty, B)$
solving the equation $(P(h)-\lambda^2)u=0$ with the Neumann boundary
condition at the right endpoint and with the following
\textbf{outgoing condition} at $-\infty$:
$$
u(x)=Ae^{-{i\lambda x/h}}\text{ for all }x<0\text{ and some constant }A.
$$
(Note that for $x<0$, $u$ solves the free equation $(-h^2
\partial_x^2-\lambda^2)u=0$, so it must be a linear combination of
$e^{\pm {i\lambda x/h}}$.)

For $\Imag\lambda>0$, the outgoing condition implies that $u$ is
exponentially decreasing on the negative half-line and thus $u\in
L^2$; therefore, $\lambda$ is a pole (of the resolvent) lying in the
upper half-plane if and only if $\lambda^2$ is an eigenvalue of $P(h)$
on $L^2$. Since $P(h)$ is self-adjoint, all poles in the upper
half-plane have to lie on the imaginary axis. There may be poles
$\lambda$ with $\Imag\lambda<0$ and $\Real\lambda\neq 0$; however, we
will restrict our attention to purely imaginary resonances:
\begin{defi}
A positive number $k$ is called a \textbf{bound state} if $ik$ is a
pole of the resolvent $R_V(\lambda)$, and an \textbf{antibound state}
if $-ik$ is a pole.
\end{defi}

We see from above that $k$ is an (anti)bound state if and only if
there exists a nonzero solution $u$ of the problem
\begin{gather}
\label{e:equation}
(P(h)+k^2)u=0,\\
\label{e:rightcond}
u_x|_{x=B}=0,\\
\label{e:leftcondo}
hu_x\pm ku|_{x=0}=0.
\end{gather}
The plus sign in (\ref{e:leftcondo}) corresponds to an antibound state
and the minus sign corresponds to a bound state. We will also study
Neumann eigenvalues of $P(h)$ on $[0,B]$; i.e., those $k$ for which
there exists a nonzero solution to (\ref{e:equation}) with boundary
conditions (\ref{e:rightcond}) and
\begin{equation}\label{e:leftcondn}
u_x|_{x=0}=0.
\end{equation}

Since the space of solutions to~(\ref{e:equation}) and~(\ref{e:rightcond})
is always one dimensional, \textbf{bound states, antibound states, and Neumann
eigenvalues never coincide}. However, Bindel and Zworski proved in \cite{BZ}
that bound and antibound states located away from zero coincide modulo
errors of order $e^{-\delta/h}$ for some $\delta>0$, if the potential
satisfies the following conditions:
$$
\begin{gathered} 
\exists A>0,V_0>0:V(x)=V_0\text{ for all }x\in [0,A],\\
\exists \varepsilon>0:V(x)=0\text{ for all }x\in (A,A+\varepsilon).
\end{gathered}
$$
In this paper, we prove a similar result with more general assumptions on the potential:

\begin{theo}\label{l:main}
Suppose that $V$ is a piecewise continuous real-valued potential
supported in $[0,B]$ and satisfying the following \textbf{bump
condition}:
\begin{equation}\label{e:bump}
\exists A>0: V(x)>0\text{ for all }x\in (0,A].
\end{equation}
Fix two constants $0<c_k<C_k<\infty$. Then there exist constants
$C,\delta>0,h_0>0$ such that for $h<h_0$ and any $k\in [c_k,C_k]$:

1. If $k$ is a Neumann eigenvalue, then there exist a bound state $k_+$
and an antibound state $k_-$ such that $|k-k_\pm|\leq Ce^{-\delta/h}$.

2. If $k$ is a bound or an antibound state, then there exists a Neumann eigenvalue
$k_0$ such that $|k-k_0|\leq Ce^{-\delta/h}$.
\end{theo}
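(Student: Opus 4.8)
The plan is to reduce all three spectral conditions to the behaviour of a single solution at the left endpoint, to transport that behaviour across the barrier produced by the bump condition, and then to resolve the resulting exponentially small splitting by means of the monotone Weyl function of the complementary interval. Let $u_k$ be the solution of~(\ref{e:equation})--(\ref{e:rightcond}) normalised by $u_k(B)=1,\ u_k'(B)=0$; it is unique and depends real-analytically on $k$, even for piecewise continuous $V$. Writing $\rho(k)=hu_k'(0)/u_k(0)$ for the scaling-invariant semiclassical logarithmic derivative at $0$, the three conditions become
\[
\rho(k)=0\ (\text{Neumann}),\qquad \rho(k)=k\ (\text{bound}),\qquad \rho(k)=-k\ (\text{antibound}),
\]
while a Dirichlet condition at $0$ would be $\rho=\infty$. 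First I would record this dictionary, noting that the targets $0,\pm k$ are distinct, so the three families of states are genuinely different; the theorem asserts that they nevertheless cluster.

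Next I would analyse $[0,A]$, which the bump condition~(\ref{e:bump}) turns into a classically forbidden region: there $V+k^2\ge c_k^2>0$, so the equation has no turning points and carries two real solutions $e_\pm(x)\approx W(x)^{-1/2}\exp(\pm\Phi(x)/h)$, with $W=\sqrt{V+k^2}$, $\Phi(x)=\int_0^x W$, and barrier action $S(k)=\Phi(A)=\int_0^A\sqrt{V(x)+k^2}\,dx>0$. Writing $u_k=c_1e_++c_2e_-$ and introducing the logarithmic derivative $\sigma(k)=hu_k'(A)/u_k(A)$ at the inner edge, a WKB/exponential-dichotomy computation on the turning-point-free barrier gives, uniformly for $k\in[c_k,C_k]$,
\[
t(k):=\frac{c_1}{c_2}=e^{-2S(k)/h}\,\frac{W(A)+\sigma(k)}{W(A)-\sigma(k)}\,(1+O(h)),\qquad \rho(k)=W(0)\,\frac{t-1}{t+1}\,(1+O(h)).
\]
Because the bump condition forces $W(0)=\sqrt{V(0^+)+k^2}>k$, each condition $\rho\in\{0,\pm k\}$ pins $t$ to an explicit \emph{order-one positive} constant $t_\ast=(W(0)+\rho_\ast)/(W(0)-\rho_\ast)$ (namely $1$ and $(W(0)\pm k)/(W(0)\mp k)$), bounded away from $0$ and $\infty$; the first identity then forces $\sigma(k)$ onto a target $\sigma_\ast(k)=W(A)-2W(A)t_\ast^{-1}e^{-2S/h}+\cdots$. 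Since the three $t_\ast$ are distinct and order one, the three targets $\sigma_\ast$ all lie within $O(e^{-2S/h})$ of one another and of $W(A)$ — a single exponentially narrow window.

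Finally I would use that $\sigma(k)$ is the Weyl $m$-function of the \emph{regular} problem on $[A,B]$ with the Neumann condition at $B$. Differentiating~(\ref{e:equation}) in $k$ and integrating the Wronskian identity gives $\sigma'(k)=-2k\,(h\,u_k(A)^2)^{-1}\int_A^B u_k(x)^2\,dx<0$, so $\sigma$ is strictly decreasing; near the clusters, where $\sigma\approx W(A)$ is order one, the same identity yields a uniform lower bound $|\sigma'(k)|\ge c_0>0$. Hence each target equation $\sigma(k)=\sigma_\ast(k)$ has exactly one solution in that window, and the three solutions differ by at most $c_0^{-1}$ times the $O(e^{-2S/h})$ spread of the targets. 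Running the intermediate value theorem in both directions — from a Neumann eigenvalue to the neighbouring bound and antibound states, and conversely — yields both assertions with the stated $Ce^{-\delta/h}$, and one may take any $\delta$ below twice the minimal barrier action $\min_{[c_k,C_k]}S(k)$.

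The main obstacle is the middle step: making the barrier connection rigorous with errors genuinely smaller than the exponentially small gaps being resolved, for a potential that is only piecewise continuous and may change sign on $[A,B]$. The entire phenomenon lives where $\sigma$ is within $O(e^{-2S/h})$ of the pole $W(A)$ of the connection formula, so the relative $O(h)$ error there must be controlled uniformly up to that pole; I would obtain it from a dichotomy estimate (Gronwall/Riccati on each continuity interval, matched $C^1$ across the finitely many jumps of $V$) rather than from classical smooth WKB, and I would keep everything in the scaling-invariant variables $\rho,\sigma,t$, so that no normalisation of $u_k$ — and hence no control of its possibly large transmission across $[A,B]$ — is ever required.
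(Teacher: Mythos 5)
Your reduction to the logarithmic derivatives $\rho(k)$ at $0$ and $\sigma(k)$ at $A$, with the Weyl-function monotonicity $\sigma'(k)=-2k\,(hu_k(A)^2)^{-1}\int_A^Bu_k^2\,dx<0$, is a genuinely different route from the paper's polar-angle/cone argument, and your final step is essentially sound: it is the Riccati-variable version of the paper's Lemmas~\ref{l:angle}, \ref{l:nondegenerate} and~\ref{l:perturb} (the asserted uniform bound $|\sigma'|\ge c_0$ near the window does need a short argument, e.g. integrating $u_k^2$ over $[A,A+\epsilon h]$, or can be bypassed entirely because the target $\sigma_*\approx\sqrt{V(A^-)+k^2}$ increases in $k$ at unit rate while $\sigma$ decreases). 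The fatal gap is in the middle step: the claim that the bump condition forces $W(0)=\sqrt{V(0^+)+k^2}>k$ is \emph{false}. Condition~(\ref{e:bump}) only requires $V>0$ on the half-open interval $(0,A]$, so $V(0^+)$ may vanish (e.g.\ $V(x)=x$ near $0$). Then $W(0)=k$, the bound-state target $t_*=(W(0)+k)/(W(0)-k)$ degenerates to $\infty$, and — much worse — the antibound target $t_*=(W(0)-k)/(W(0)+k)$ degenerates to $0$, so your assertion that the three $t_*$ are order-one and bounded away from $0$ and $\infty$ fails exactly where the problem is hardest. The case $t_*=0$ breaks the argument as written: the antibound data $(u,hu_x)=(1,-k)$ is then tangent, to leading order, to the contracting direction of the barrier, and a solution with $t=0$ is the decaying barrier solution, whose transported logarithmic derivative is $\sigma\approx -W(A)$, nowhere near the exponentially narrow window around $+W(A)$; the antibound roots would then have no reason to cluster with the Neumann ones. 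To repair this you need a quantitative transversality bound, roughly $t\,e^{2S/h}\ge e^{\delta_0/h}$, and the only source for it is the strict positivity of $V$ on compact subsets of $(0,A]$: the comparison function $e^{-kx/h}$ decays strictly slower than the contracting solution because $\int_0^A(\sqrt{V+k^2}-k)\,dx>0$. That comparison, implemented through monotone Wronskians, is precisely the paper's delicate treatment of its solution $u_-$ (Lemma~\ref{l:estimate}, applied with $\psi_0=e^{-kx/h}$ and $\psi_-=e^{-k_ax/h}$, where $a$ is a positive lower bound for $V$ on a subinterval $[\epsilon A,A]$), and nothing in your outline supplies it.

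A secondary but real problem is regularity. The $(1+O(h))$ connection formulas you invoke are not available for piecewise continuous $V$: the pieces are merely continuous, so the WKB amplitude $W^{-1/2}$ and the standard error bounds (which require $V'$, $V''$, or at least bounded variation) are meaningless, and your proposed substitute — Gronwall/Riccati on each continuity interval — cannot deliver relative errors $1+O(h)$ either. What one can get for $L^\infty$ barriers is an exponential dichotomy with $e^{o(1/h)}$ errors; that cruder version would in fact suffice for the non-degenerate part of your scheme (all you need is that the three $\sigma$-targets lie within $e^{-\delta/h}$ of each other for \emph{some} $\delta>0$), but it makes the degenerate case above even more delicate, and your claimed exponent (any $\delta<2\min_kS(k)$) is then over-optimistic — the paper obtains only roughly $\delta\approx c_kA$. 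This contrast explains the paper's design: it avoids asymptotic expansions altogether and uses only signs and monotonicity of Wronskians against explicit exponentials, which is why it works verbatim at $L^\infty$ regularity and uniformly through the degenerate case $V(0^+)=0$.
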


\begin{figure}
\includegraphics{plots.1}
\caption{Bound and antibound states for two spline potentials
(\texttt{splinepot([0, -0.4, -1, -0.2, -1, -0.4, 0], [-2, -1.5, -1, 0, 1, 1.5, 2])}
and
\texttt{splinepot([0, 0.2, -1, -0.2, -1, 0.2, 0], [-2, 1.5, -1, 0, 1, 1.5, 2])})}
\end{figure}

The bump condition (\ref{e:bump}) cannot be disposed of completely, as
illustrated by the numerical experiments performed using \cite{B}.
Figure~1 shows two potentials on the whole line, each supported in
$[-2,2]$, and the corresponding bound states (denoted by squares) and
antibound states (denoted by circles).  The vertical coordinate of
each (anti)bound state on the picture corresponds to its value $k$;
the horizontal coordinate corresponds to the value of $h^{-1}$
used. We see that the conclusion of the theorem does not appear to
hold for the potential on the left, which does not satify the bump
condition; at the same time, it is true for the potential on the
right. Theorem~1, formulated for the half-line case, applies to these
numerical experiments on the whole line since for even potentials, the
set of their (anti)bound states is composed of these states for the
positive half-line with Dirichlet condition and the same states for
the Neumann condition; the theorem above can be applied with Dirichlet
condition in place of (\ref{e:rightcond}).  (However, condition
(\ref{e:leftcondn}) cannot be replaced by Dirichlet condition in the
theorem.)

The study of resonances in one dimension has a long tradition going
back to origins of quantum mechanics, see for instance~\cite{LL}.  One
of the first studies of their distribution was conducted by
Regge~\cite{R}; since then, there have been many mathematical results
on the topic, including~\cite{AA}, \cite{BC}, \cite{F}, \cite{H},
\cite{K}, \cite{N}, \cite{S}, and~\cite{Z}.  Concerning antibound
states, Hitrik has shown in~\cite{H} that for a positive compactly
supported potential, there are no antibound states in the
semiclassical limit. This agrees with our result since there are no
bound states in this case. Simon proved in~\cite{S} that between any
two bound states, there must be an odd number of antibound states; the
following corollary of this result follows almost immediately using
the methods we develop to prove Theorem~1:

\begin{theo}\label{l:simon}
Consider the half-line problem with a bounded compactly supported
potential $V$ (which does not need to satisfy any positivity
condition). Then for each two bound states $0<k_1<k_2$, the interval
$(k_1,k_2)$ contains at least one antibound state.  In particular, if
there are $n$ bound states in some subinterval of $(0,\infty)$, then
there are at least $n-1$ antibound states in the same subinterval.
\end{theo}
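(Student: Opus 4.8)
The plan is to reduce the whole statement to the monotonicity in $k$ of a single real-analytic quantity, namely the (semiclassical) logarithmic derivative at the left endpoint of the Neumann-normalized solution. For $k>0$ let $\phi(x,k)$ be the solution of $(P(h)+k^2)\phi=0$ on $[0,B]$ with $\phi(B,k)=1$, $\phi_x(B,k)=0$; it exists, is real for real $k$, and is analytic in $k$. Extending $\phi$ to $x<0$ (where $V=0$) as a combination $a\,e^{kx/h}+b\,e^{-kx/h}$ matched to $\phi(0,k),\phi_x(0,k)$, the outgoing condition for $ik$ forces $b=0$ and for $-ik$ forces $a=0$; comparing with (\ref{e:leftcondo}) one sees that $k$ is a bound state iff $h\phi_x(0,k)-k\phi(0,k)=0$ and an antibound state iff $h\phi_x(0,k)+k\phi(0,k)=0$. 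Setting
$$
m(k)=\frac{h\phi_x(0,k)}{\phi(0,k)},
$$
this reads: $k$ is a bound state iff $m(k)=k$, and an antibound state iff $m(k)=-k$.

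The key step is to show $m$ is strictly decreasing wherever it is finite. Writing $\psi=\partial_k\phi$ and differentiating $h^2\phi_{xx}=(V+k^2)\phi$ in $k$ gives $h^2\psi_{xx}=(V+k^2)\psi+2k\phi$, whence $\frac{d}{dx}\bigl(\phi_x\psi-\phi\psi_x\bigr)=-2k\phi^2/h^2$. Integrating over $[0,B]$ and using $\phi(B,k)=1$, $\phi_x(B,k)=\psi(B,k)=\psi_x(B,k)=0$ yields $\phi_x(0,k)\psi(0,k)-\phi(0,k)\psi_x(0,k)=\frac{2k}{h^2}\int_0^B\phi^2\,dx$, and therefore
$$
m'(k)=-\frac{2k}{h\,\phi(0,k)^2}\int_0^B\phi(x,k)^2\,dx<0\qquad(k>0).
$$
Since $\phi(0,k)=0$ forces $\phi_x(0,k)\neq0$ (otherwise $\phi\equiv0$), the zeros of $\phi(0,\cdot)$ are exactly the poles of $m$, and at such poles there is neither a bound nor an antibound state. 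On each open interval between consecutive poles, $m$ is continuous and strictly decreasing, running from $+\infty$ to $-\infty$.

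This structure gives the interlacing directly. On each interval $m(k)-k$ is strictly decreasing, so each interval contains at most one bound state; and since $m(k)+k\to+\infty$ at the left endpoint and $\to-\infty$ at the right endpoint, each interior interval (one with finite endpoints) contains at least one antibound state. At a bound state $m=k>0$ while at an antibound state $m=-k<0$, so by monotonicity every antibound state in a given interval lies to the right of that interval's bound state. Now let $0<k_1<k_2$ be two bound states. They lie in distinct intervals; let $(p_i,p_{i+1})$ be the one containing $k_1$. Because a bound state $k_2>k_1$ exists, this interval cannot be the unbounded rightmost one, so $p_{i+1}$ is a finite pole with $k_1<p_{i+1}<k_2$, and the antibound state $k_-$ of this interval satisfies $k_1<k_-<p_{i+1}<k_2$, proving the first assertion. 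For the last statement, $n$ bound states occupy $n$ distinct intervals; applying the above to each consecutive pair produces antibound states in the pairwise-disjoint open intervals between them, hence at least $n-1$ distinct antibound states, all lying in the same subinterval.

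The main obstacle is the monotonicity identity for $m'$ together with the correct bookkeeping at the poles of $m$: one must check that $\phi(0,\cdot)$ and $\phi_x(0,\cdot)$ never vanish simultaneously, so that $m$ passes through $\pm\infty$ across each pole, and that $m$ truly exhausts $(-\infty,+\infty)$ on each interior interval, since it is precisely this that forces a sign change of $m(k)+k$ and hence the intermediate antibound state. Note that the argument uses only that $V$ is real, bounded, and compactly supported; no positivity enters, which is why the bump condition (\ref{e:bump}) is absent from this theorem.
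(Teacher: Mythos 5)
Your reduction to the function $m(k)=h\phi_x(0,k)/\phi(0,k)$ is sound, and it is a genuinely different organization from the paper's argument. The paper works projectively: it evolves the two outgoing solutions $u_\pm$ (data $u_\pm(0)=1$, $h\partial_x u_\pm(0)=\pm k$) forward to $x=B$ and compares their polar angles $\Phi_\pm(k)=\theta(u_\pm)|_{x=B}$ with the Neumann condition there, using $\Phi_+'(k)>0$, the fact that $2(\Phi_+-\Phi_-)$ never vanishes (the Wronskian of $u_+$ and $u_-$ equals $-2k\neq 0$), and the winding Lemma~\ref{l:calc}. Your $m$ is exactly the tangent of the angle $\theta(u_1)|_{x=0}$ of the paper's solution $u_1=\phi$, and your identity for $m'$ is the same Wronskian-variation computation as Lemma~\ref{l:angle}, read at $x=0$ instead of $x=B$. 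The substantive difference is topological: the circle-valued angle has no singularities, so the paper needs no case analysis at zeros of solutions, whereas the $m$-function formulation forces you to track the poles $\{k:\phi(0,k)=0\}$ explicitly. That bookkeeping is precisely where your write-up has a gap.

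The gap is this: you prove existence of an antibound state only in \emph{interior} intervals between consecutive poles (both endpoints finite poles, so that $m(k)+k$ runs from $+\infty$ down past $-\infty$), but you then invoke ``the antibound state of this interval'' for the interval containing $k_1$. That interval need not be interior: if $m$ has no pole in $(0,k_1)$, its left endpoint is $0$, where nothing forces $m(k)+k\to+\infty$ (if $\phi(0,0)\neq 0$ then $m(0^+)$ is finite). The fix is immediate and in fact shortens the proof: you never need the left endpoint. At the bound state itself $m(k_1)+k_1=2k_1>0$, while $m(k)+k\to-\infty$ as $k\to p_{i+1}^-$; here $p_{i+1}$ is a genuine pole lying in $(k_1,k_2)$, because $k_2$ lies in a different component (at most one bound state per component, by strict monotonicity of $m(k)-k$), so the component of $k_1$ has a finite right endpoint below $k_2$. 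The intermediate value theorem on $[k_1,p_{i+1})$ then produces an antibound state in $(k_1,p_{i+1})\subset(k_1,k_2)$. You should also say explicitly that the poles are isolated because $\phi(0,\cdot)$ is analytic and not identically zero (if it were identically zero there would be no bound states and the theorem is vacuous), and that $\int_0^B\phi^2\,dx>0$ so the inequality $m'<0$ is strict. With these patches your argument, including the count of $n-1$ distinct antibound states, is complete.
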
 

The proof of Theorem 1 works as follows: we study the evolution (in $x$)
of the
vectors $(u,hu_x)$ for the three solutions of (\ref{e:equation}) with
initial data at $x=0$ satisfying the conditions (\ref{e:leftcondo})
and (\ref{e:leftcondn}).  The idea is to look at these three vectors
at $x=A$. Since $V(x)+k^2\geq 0$ on the interval $(0,A)$, the
transition matrix for the considered vectors from $x=0$ to $x=A$ will
have an expanding and a contracting direction.  (In fact, if we
introduce rescaling $\tilde x=x/h$, then the behavior of the original
system for small $h$ is similar to the behavior of the rescaled system
for large $\tilde x$, and the latter will be similar to the behavior
of the geodesic flow on a two-dimensional manifold of negative
curvature.)  It turns out that our three vectors lie in a certain
angle between the expanding and the contracting directions, from which
it follows that they will stay in this angle for later times
(Lemma~\ref{l:estimate}); what is more, their polar angles will get
exponentially close to each other (Lemma~\ref{l:close}).  Finally, we
can study how the polar angles of the considered vectors change with
$k$ (Lemma~\ref{l:angle}): it follows (Lemma~\ref{l:nondegenerate})
that the polar angle for the solution with Neumann initial data at
$x=0$ will strictly increase in $k$ and the polar angle for the
solution with the same data at $x=B$ will decrease in $k$. The proof
is then completed by a pertrubation argument (Lemma~\ref{l:perturb}).

The detailed proofs of Theorems~\ref{l:main} and~\ref{l:simon} are given in
Section~3.  Both are elementary and use certain properties of ordinary
differential equations presented in Section~2.

The authors would like to thank Maciej Zworski for suggesting the
problem and for many illuminating discussions.

\section{Preliminaries}

Throughout this section, $I$ is an interval in $\mathbb R$, $V(x)\in
L^\infty(I;\mathbb R)$, $u(x), v(x)\in H^2(I;\mathbb R)$, $h>0$, and
$P(h)=-h^2\partial_x^2+V(x)$. 
Any solution to the equation $P(h)u=0$
is determined by the vector $(u,hu_x)$ at any $x$; we will sometimes
view this vector in polar coordinates: 

\begin{defi}\label{d:langle}
Define the \textbf{length} $L(u)$ and the \textbf{polar angle}
$\theta(u)$ by the equations
$$
\begin{gathered}
u=L(u)\cos\theta(u),\\
hu_x=L(u)\sin\theta(u).
\end{gathered}
$$
Here $\theta(u)$ lies in the circle $\mathbb S^1=\mathbb R/2\pi\mathbb Z$.
\end{defi}

\begin{lemm}\label{l:wronskian} Define the \textbf{Wronskian} $W(u,v)$ by
$$
W(u,v)=h(uv_x-vu_x).
$$
Then 
\begin{gather}
\label{e:wrondef}W(u,v)=L(u)L(v)\sin(\theta(v)-\theta(u)),\\
\label{e:wronskian}h \partial_x W(u,v)=v\cdot P(h)u-u\cdot P(h)v.
\end{gather}
\end{lemm}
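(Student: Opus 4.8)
The plan is to establish both identities by direct computation from the definitions, since each reduces to an elementary algebraic manipulation rather than any substantive estimate.

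For~(\ref{e:wrondef}), I would substitute the polar-coordinate expressions from Definition~\ref{d:langle}. Writing $u=L(u)\cos\theta(u)$, $hu_x=L(u)\sin\theta(u)$, and likewise $v=L(v)\cos\theta(v)$, $hv_x=L(v)\sin\theta(v)$, the Wronskian becomes $W(u,v)=u\cdot hv_x-v\cdot hu_x=L(u)L(v)\bigl(\cos\theta(u)\sin\theta(v)-\sin\theta(u)\cos\theta(v)\bigr)$. The factor in parentheses is exactly $\sin(\theta(v)-\theta(u))$ by the sine subtraction formula, which is the claim. The only point requiring a moment's care is keeping the sign convention consistent, so that the argument of the sine comes out as $\theta(v)-\theta(u)$ rather than its negative; this is pinned down by the order of the two terms in the definition of $W$.

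For~(\ref{e:wronskian}), I would apply the product rule to $W(u,v)=h(uv_x-vu_x)$. Differentiating gives $h\partial_x W(u,v)=h^2(u_xv_x+uv_{xx}-v_xu_x-vu_{xx})$; the cross terms $h^2u_xv_x$ cancel, leaving $h^2(uv_{xx}-vu_{xx})$. Rewriting each second-derivative term through the identity $-h^2w_{xx}=P(h)w-Vw$ and observing that the potential contributions $Vuv$ cancel against each other, this collapses to $v\cdot P(h)u-u\cdot P(h)v$, as required.

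The hard part, such as it is, is purely bookkeeping: there is no analytic obstacle here, only the need to track signs carefully in the trigonometric identity and to confirm that the potential terms drop out in the second computation. I would note that both identities hold pointwise for every $x\in I$ and remain valid regardless of whether $L(u)$ or $L(v)$ vanishes, so no nondegeneracy hypothesis on $u$ or $v$ is needed.
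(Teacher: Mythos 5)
Your proof is correct: both computations (the sine subtraction formula for~(\ref{e:wrondef}) and the product rule plus cancellation of the potential terms for~(\ref{e:wronskian})) are exactly the elementary verifications required, and the paper itself states this lemma without proof, treating it as precisely this kind of direct calculation. Your closing remark that the identities hold even when $L(u)$ or $L(v)$ vanishes is a fair observation, since in that degenerate case both sides of~(\ref{e:wrondef}) are zero for any choice of the (then undefined) polar angle.
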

Note that the $W(u,v)$ is just the oriented area of the parallelogram
spanned by the vectors $(u,hu_x)$ and $(v,hv_x)$. The next lemma tells
us that if the vector $(u,hu_x)$ falls inside a certain angle in the
plane at the initial time, then it will stay inside that angle for all
later times:

\begin{lemm}\label{l:estimate}
Suppose that $a^2\leq V(x)\leq b^2$ for all $x\in I$ and some
constants $a,b>0$. Let $u$ be a solution to $P(h)u=0$ and define
$$
W_+(u)=W(u,e^{bx/ h}),\
W_-(u)=W(e^{-{ax/h}},u).
$$
Let $x_0$ be a point in $I$ and assume that $W_+(u),W_-(u)\geq 0$ at
$x_0$.  Then for $x\geq x_0$, the functions $W_\pm(u)$ are increasing in $x$
and
\begin{equation}\label{e:ugeq}
u\geq \frac{L(u)}{\sqrt{1+b^2}}.
\end{equation}
\end{lemm}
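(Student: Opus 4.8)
The plan is to exploit the Wronskian derivative identity (\ref{e:wronskian}) together with the geometric reading of the Wronskian (\ref{e:wrondef}). First I would compute the two relevant derivatives. Since $u$ solves $P(h)u=0$ and since $P(h)e^{\pm cx/h}=(V(x)-c^2)e^{\pm cx/h}$ for any constant $c$, the identity (\ref{e:wronskian}) yields
$$
h\partial_x W_+(u)=u\,(b^2-V(x))\,e^{bx/h},\qquad h\partial_x W_-(u)=u\,(V(x)-a^2)\,e^{-ax/h}.
$$
Because $a^2\le V\le b^2$ and the exponentials are positive, both right-hand sides have the same sign as $u$; in particular, wherever $u\ge 0$ the functions $W_\pm(u)$ are non-decreasing. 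Thus the whole lemma reduces to showing that $u\ge 0$ persists for $x\ge x_0$.

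Next I would translate the hypotheses into angular language. The vector $(v,hv_x)$ attached to $v=e^{bx/h}$ is $e^{bx/h}(1,b)$, with polar angle $\arctan b$, and the one attached to $e^{-ax/h}$ is $e^{-ax/h}(1,-a)$, with polar angle $-\arctan a$. Hence (\ref{e:wrondef}) shows that $W_+(u)\ge 0$ is equivalent to $\sin(\arctan b-\theta(u))\ge 0$ and $W_-(u)\ge 0$ to $\sin(\theta(u)+\arctan a)\ge 0$; together these confine the polar angle to the closed sector $\theta(u)\in[-\arctan a,\arctan b]$. Since $a\le b$, this sector sits inside $(-\pi/2,\pi/2)$ and satisfies $|\theta(u)|\le\arctan b$, so $\cos\theta(u)\ge\cos(\arctan b)=1/\sqrt{1+b^2}$, which is exactly the bound (\ref{e:ugeq}). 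In particular, on this sector $u=L(u)\cos\theta(u)\ge 0$, and $u>0$ strictly whenever $L(u)>0$.

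The core step is then a continuity (barrier) argument showing the sector is forward invariant. Assuming $u\not\equiv 0$ (otherwise the statement is trivial), uniqueness for the linear system governing $(u,hu_x)$ gives $L(u)>0$ everywhere. Let $x^*$ be the supremum of those $x\ge x_0$ for which $W_+(u),W_-(u)\ge 0$ throughout $[x_0,x]$. On $[x_0,x^*)$ the angular confinement holds, so $u\ge 0$ and $W_\pm(u)$ are non-decreasing; by continuity the confinement persists at $x^*$, forcing $u(x^*)=L(u)(x^*)\cos\theta(u)(x^*)>0$ since the sector avoids the vertical directions $\theta=\pm\pi/2$. By continuity $u>0$ on a neighborhood of $x^*$, so there $W_\pm(u)$ are non-decreasing and stay $\ge 0$ just past $x^*$; this contradicts the maximality of $x^*$ unless $x^*$ is the right endpoint of $I$. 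Therefore $W_\pm(u)\ge 0$ for all $x\ge x_0$, whence they are non-decreasing and (\ref{e:ugeq}) holds throughout.

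The delicate point is precisely this last invariance argument: naively one obtains only $\partial_x W_\pm\ge 0$, which is too weak to rule out $W_\pm(u)$ touching zero and turning negative at a point where $u$ simultaneously vanishes. The resolution I would emphasize is that coupling the two sign conditions pins $\theta(u)$ strictly away from $\pm\pi/2$, so $u$ is strictly positive at any candidate exit point, producing the open neighborhood on which $W_\pm(u)$ cannot decrease.
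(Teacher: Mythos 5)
Your proof is correct and takes essentially the same route as the paper: the same derivative computation for $W_\pm(u)$ via (\ref{e:wronskian}), the same pointwise observation that $W_+(u),W_-(u)\ge 0$ forces (\ref{e:ugeq}) and hence strict positivity of $u$ wherever $L(u)>0$ (you phrase this as confinement of $\theta(u)$ to the sector $[-\arctan a,\arctan b]$, while the paper writes it algebraically as $|hu_x|\le bu$), and the same continuation argument closing the bootstrap (your supremum $x^*$ of the good set is the mirror image of the paper's infimum $x_1=\inf\{x\ge x_0\mid u(x)<0\}$ of the bad set). No gaps.
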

\begin{proof}
We have
$$
e^{-{bx/ h}}W_+(u)=bu-hu_x,\
e^{ax/h}W_-(u)=au+hu_x.
$$
Therefore, $W_+(u),W_-(u)\geq 0$ yields $|hu_x|\leq bu$ and thus
(\ref{e:ugeq}). Next,
$$
\begin{gathered} 
P(h)e^{bx/ h}=e^{bx/ h}(V(x)-b^2)\leq 0,\\
P(h)e^{-{ax/ h}}=e^{-{ax/ h}}(V(x)-a^2)\geq 0.
\end{gathered}
$$
Using (\ref{e:wronskian}), we see that $\partial_xW_\pm\geq 0$ as long
as $u\geq 0$. It remains to prove that $u(x)\geq 0$ for $x\geq x_0$.
Suppose this is false and let $x_1=\inf\{x\geq x_0\mid u(x)<0\}$.
Then $u$ is not identically zero; since it solves a second
order linear ODE, $L(u)>0$ everywhere.  But
$u\geq 0$ on $[x_0,x_1]$, so $W_\pm$ are increasing on this interval.
In particular, $W_\pm\geq 0$ at $x_1$ and thus (\ref{e:ugeq}) holds at
this point.  However, by the choice of $x_1$ we have $u(x_1)=0$, which
contradicts $L(u)>0$.
\end{proof}

In the next section, we will use the following crude estimate
on how fast the solutions of an ODE can grow:
\begin{lemm}\label{l:crude}
Assume that $|V(x)|\leq M$ for $x\in I$ and that $u$ is a solution to
$P(h)u=0$. Take $x_0,x_1\in I$; then
$$
L(u)|_{x=x_1}\leq e^{(1+M)|x_0-x_1|/(2h)}\cdot L(u)|_{x=x_0}.
$$
\end{lemm}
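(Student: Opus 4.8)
The plan is to control the growth of the squared length $L(u)^2=u^2+h^2u_x^2$ directly, by deriving a differential inequality for it and integrating. First I would differentiate in $x$, obtaining
$$
\partial_x\bigl(u^2+h^2u_x^2\bigr)=2uu_x+2h^2u_xu_{xx}.
$$
The equation $P(h)u=0$ reads $h^2u_{xx}=V(x)u$, so substituting this in gives the clean formula $\partial_x L(u)^2=2\bigl(1+V(x)\bigr)uu_x$.

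Next I would bound the right-hand side. By the arithmetic--geometric mean inequality, $2|u|\cdot|hu_x|\le u^2+h^2u_x^2=L(u)^2$, hence $2|u|\,|u_x|\le L(u)^2/h$; combining this with $|1+V(x)|\le 1+M$ yields the pointwise estimate
$$
\bigl|\partial_x L(u)^2\bigr|\le \frac{1+M}{h}\,L(u)^2.
$$

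Finally I would integrate this differential inequality in the style of Gr\"onwall's lemma. If $L(u)$ vanishes at some point of $I$, then $u$ and $u_x$ both vanish there, so the solution $u$ is identically zero and the claimed bound holds trivially; otherwise $L(u)>0$ throughout $I$, and the inequality above says that $\partial_x\log L(u)^2$ has absolute value at most $(1+M)/h$. Integrating between $x_0$ and $x_1$ (the two orderings being symmetric, which is why the estimate only sees $|x_0-x_1|$) gives $L(u)^2|_{x=x_1}\le e^{(1+M)|x_0-x_1|/h}\,L(u)^2|_{x=x_0}$, and taking square roots produces exactly the stated bound.

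I do not expect any serious obstacle here: the argument is a routine energy/Gr\"onwall estimate. The only points demanding a little care are the degenerate case $u\equiv 0$, handled above, and keeping the factor of two straight---the exponent $(1+M)/(2h)$ in the conclusion is half of the rate $(1+M)/h$ appearing in the differential inequality precisely because $L(u)^2$ grows at twice the rate of $L(u)$.
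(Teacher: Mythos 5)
Your proof is correct and takes essentially the same approach as the paper: both differentiate $L(u)^2$, substitute $h^2u_{xx}=V(x)u$ to get $\partial_x L(u)^2=2(1+V)uu_x$, bound this by $(1+M)L(u)^2/h$ via AM--GM, and conclude with Gr\"onwall. Your version merely carries out the Gr\"onwall step by hand (integrating $\partial_x\log L(u)^2$, with the degenerate case $u\equiv 0$ handled explicitly) and keeps a two-sided inequality so that both orderings of $x_0,x_1$ are covered, where the paper instead says ``without loss of generality $x_1>x_0$.''
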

\begin{proof}
Without loss of generality we may assume that $x_1>x_0$.  We have
$L(u)^2=u^2+(hu_x)^2$; thus
$$
h \partial_x(L(u)^2)=2huu_x(1+V(x))\leq (1+M)L(u)^2
$$
and the lemma is proven by Gronwall's inequality.
\end{proof}

\begin{lemm}\label{l:angle}
Assume that $u(x,k)$ is a family of solutions to $(P(h)+k^2)u=0$,
$x_0,x_1\in I$, and $u(x_0,k)$ and $u_x(x_0,k)$ are independent of
$k$.  Let $\Theta_1(k)=\theta(u(x,k))|_{x=x_1}$,
$L_1(k)=L(u(x,k))|_{x=x_1}$. Then
$$
\Theta_1'(k)=\frac{2k}{hL_1(k)^2}\int_{x_0}^{x_1}u(x,k)^2\,dx.
$$
\end{lemm}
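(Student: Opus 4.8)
The plan is to express $\Theta_1'(k)$ as a Wronskian and then evaluate that Wronskian by integrating the identity~(\ref{e:wronskian}). First I would set $v=\partial_k u$; since the initial data $u(x_0,k)$ and $u_x(x_0,k)$ are independent of $k$, we have $v(x_0)=v_x(x_0)=0$. Differentiating Definition~\ref{d:langle} in $k$ and using the standard formula for the differential of the polar angle of a planar vector $(p,q)$, namely $d\theta=(p\,dq-q\,dp)/(p^2+q^2)$ applied to $(p,q)=(u,hu_x)$, one obtains
$$
\partial_k\theta(u)=\frac{u\cdot h v_x-hu_x\cdot v}{u^2+(hu_x)^2}=\frac{W(u,v)}{L(u)^2}.
$$
Evaluating at $x=x_1$ gives $\Theta_1'(k)=W(u,v)|_{x=x_1}/L_1(k)^2$, so it remains to compute $W(u,v)$ at $x_1$.

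Next I would find the equation satisfied by $v$. Differentiating $(P(h)+k^2)u=0$ in $k$ yields $(P(h)+k^2)v=-2ku$, hence $P(h)v=-k^2v-2ku$ while $P(h)u=-k^2u$. Substituting these into the Wronskian identity~(\ref{e:wronskian}) (which is a pure identity, valid for any $u,v$) gives
$$
h\partial_x W(u,v)=v\cdot P(h)u-u\cdot P(h)v=-k^2uv+k^2uv+2ku^2=2ku^2.
$$

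Finally I would integrate this from $x_0$ to $x_1$. The boundary term at $x_0$ vanishes because $v(x_0)=v_x(x_0)=0$, so $W(u,v)|_{x=x_0}=0$; therefore $W(u,v)|_{x=x_1}=(2k/h)\int_{x_0}^{x_1}u^2\,dx$, and dividing by $L_1(k)^2$ yields the claimed formula. The only point requiring care is the smooth dependence of the solution on the parameter $k$, so that $v=\partial_k u$ exists, is itself a solution of the differentiated equation, and the $x$- and $k$-derivatives commute; this is standard for linear ODEs whose coefficients depend smoothly on a parameter. I do not expect a genuine obstacle here: once the angle-derivative formula and the Wronskian identity are in place, the computation is entirely elementary.
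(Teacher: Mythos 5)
Your proof is correct and follows essentially the same route as the paper: express $\Theta_1'(k)$ as $W(u,\partial_k u)/L_1(k)^2$, differentiate the equation in $k$ to get $(P(h)+k^2)\partial_k u=-2ku$, and integrate the identity~(\ref{e:wronskian}) from $x_0$ to $x_1$ using $W(u,\partial_k u)|_{x=x_0}=0$. The paper's version is merely more terse; your filled-in computations (the polar-angle differential and the cancellation of the $k^2uv$ terms) are exactly what it leaves implicit.
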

\begin{proof}
We have $W(u,u_k)|_{x=x_1}=L_1(k)^2\Theta'_1$.
(To see that, differentiate the formulas in Definition~\ref{d:langle}
in $k$ and use the definition of the Wronskian.)
Now, we differentiate
the equation $(P(h)+k^2)u=0$ in $k$ to get $(P(h)+k^2)u_k=-2ku$.  It
remains to apply (\ref{e:wronskian}) together with
$W(u,u_k)|_{x=x_0}=0$.
\end{proof}

\begin{lemm}\label{l:perturb}
Assume that $\Phi$ is a $C^1$ map from the interval
$I=[K_0,K_1]$ to the circle $S^1=\mathbb R/2\pi \mathbb Z$
and $\Phi'(k)\geq\delta>0$ for all $k\in
I$.  Suppose that $\Psi:I\to \mathbb S^1$ is a continuous map such
that $|\Psi(k)|\leq\varepsilon<\pi$ for all $k$. Put
$\nu=\varepsilon/\delta$ and $I_\nu=[K_0+\nu,K_1-\nu]$. Then:

1. If $k_0\in I_\nu$ has $\Phi(k_0)=0$, then there exists $k_1\in I$
with $\Phi(k_1)=\Psi(k_1)$ and $|k_0-k_1|\leq\nu$.

2. If $k_1\in I_\nu$ has $\Phi(k_1)=\Psi(k_1)$, then there exists
$k_0\in I$ with $\Phi(k_0)=0$ and $|k_0-k_1|\leq\nu$.
\end{lemm}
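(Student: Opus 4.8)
The plan is to pass to the universal cover $\mathbb{R}$ of the circle $S^1$ and reduce both statements to a quantitative form of the intermediate value theorem. The point is that $\Phi$, being strictly increasing with $\Phi'\geq\delta$, moves by at least $\delta\nu=\varepsilon$ across any subinterval of length $\nu$, while $\Psi$ never leaves the arc $[-\varepsilon,\varepsilon]$; these two facts together force a crossing within distance $\nu$, and a matching zero of $\Phi$ within distance $\nu$ of any such crossing.

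First I would fix lifts. Since $I$ is an interval, $\Phi$ admits a continuous (indeed $C^1$) lift $\widetilde\Phi\colon I\to\mathbb{R}$, which I normalize by the relevant value at the given point: the representative $0$ of $\Phi(k_0)=0$ in Part~1, and the representative of $\Phi(k_1)$ lying in $[-\varepsilon,\varepsilon]$ in Part~2. The hypothesis $|\Psi|\leq\varepsilon<\pi$ means that each $\Psi(k)$ has a unique representative in $[-\varepsilon,\varepsilon]$, and since $2\varepsilon<2\pi$ these representatives vary continuously; this yields a continuous lift $\widetilde\Psi\colon I\to[-\varepsilon,\varepsilon]$. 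Integrating $\widetilde\Phi'\geq\delta$ gives, whenever $k\pm\nu\in I$, the bounds $\widetilde\Phi(k+\nu)\geq\widetilde\Phi(k)+\varepsilon$ and $\widetilde\Phi(k-\nu)\leq\widetilde\Phi(k)-\varepsilon$.

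For Part~1, normalize $\widetilde\Phi(k_0)=0$ and set $g=\widetilde\Phi-\widetilde\Psi$ on $[k_0-\nu,k_0+\nu]$, which lies in $I$ because $k_0\in I_\nu$. Then $g(k_0+\nu)\geq\varepsilon-\varepsilon=0$ and $g(k_0-\nu)\leq-\varepsilon+\varepsilon=0$, so continuity of $g$ produces a zero $k_1$ with $|k_1-k_0|\leq\nu$, where $\widetilde\Phi(k_1)=\widetilde\Psi(k_1)$ and hence $\Phi(k_1)=\Psi(k_1)$ in $S^1$. For Part~2, normalize $\widetilde\Phi(k_1)=\widetilde\Psi(k_1)\in[-\varepsilon,\varepsilon]$; the same derivative bound gives $\widetilde\Phi(k_1-\nu)\leq 0\leq\widetilde\Phi(k_1+\nu)$, and the intermediate value theorem applied to $\widetilde\Phi$ on $[k_1-\nu,k_1+\nu]\subset I$ delivers $k_0$ with $\widetilde\Phi(k_0)=0$, so $\Phi(k_0)=0$ and $|k_0-k_1|\leq\nu$. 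The only genuine subtlety, and the step I would treat most carefully, is the construction and continuity of the lift $\widetilde\Psi$: the strict inequality $\varepsilon<\pi$ is exactly what prevents $\Psi$ from wrapping around the circle and confines its representative to a single arc on which the intermediate value argument is valid.
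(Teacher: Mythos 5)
Your proof is correct and follows essentially the same route as the paper: lift both maps to the universal cover, use $\Phi'\geq\delta$ to force a swing of at least $\varepsilon=\delta\nu$ over intervals of length $\nu$, and apply the intermediate value theorem to $\widetilde\Phi-\widetilde\Psi$ (Part 1) and to $\widetilde\Phi$ itself (Part 2). The only cosmetic difference is your normalization of the lifts at the given point, where the paper instead carries an integer $2\pi m$ through the inequalities; your careful treatment of the continuous lift of $\Psi$ into $[-\varepsilon,\varepsilon]$ (using $\varepsilon<\pi$) is exactly the point the paper leaves implicit.
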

\begin{proof}
We can lift $\Phi$ and $\Psi$ to continuous maps
$\bar\Phi,\bar\Psi:I\to \mathbb R$; then $|\bar\Psi|\leq\varepsilon$
and $\bar\Phi(k')-\bar\Phi(k)\geq \delta(k'-k)$ for $k'\geq k$.

1. We have $\bar\Phi(k_0)=2\pi m$ for some $m\in \mathbb Z$.  Then
$\bar\Phi(k_0+\nu)\geq 2\pi m+\delta\nu\geq 2\pi m+\bar\Psi(k_0+\nu)$
and $\bar\Phi(k_0-\nu)\leq 2\pi m+\bar\Psi(k_0-\nu)$; it remains to
apply the intermediate value theorem.

2. Similar to the previous statement, we have $\bar\Phi(k_1)=2\pi
m+\bar\Psi(k_1)$ for some $m\in \mathbb Z$ and $\bar\Phi(k_1+\nu)\geq
2\pi m\geq \bar\Phi(k_1-\nu)$.
\end{proof}

\begin{lemm}\label{l:calc}
Assume that $\Phi$ is a $C^1$ map from some interval $I$ to the circle
$\mathbb S^1=\mathbb R/(2\pi \mathbb Z)$ with $\Phi'(k)>0$ for all
$k\in I$. Let $\Psi:I\to \mathbb S^1$ be a continuous map such that
$\Psi(k)\neq 0$ for all $k\in I$.  If $k_1<k_2$ are two roots of the
equation $\Phi=0$, then the interval $(k_1,k_2)$ contains at least one
root of the equation $\Phi=\Psi$.
\end{lemm}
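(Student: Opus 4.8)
The plan is to mimic the lifting argument used in Lemma~\ref{l:perturb}, the only new ingredient being a confinement observation for $\Psi$ coming from its nonvanishing. First I would lift $\Phi$ and $\Psi$ to continuous real-valued maps $\bar\Phi,\bar\Psi\colon I\to\mathbb R$. Since $\Phi'(k)>0$ everywhere, the lift $\bar\Phi$ is strictly increasing. At the two given roots we have $\bar\Phi(k_1)=2\pi m_1$ and $\bar\Phi(k_2)=2\pi m_2$ for integers $m_1,m_2$, and strict monotonicity forces $m_2\geq m_1+1$; thus $\bar\Phi$ gains at least a full turn over $[k_1,k_2]$.

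The key step is to exploit the hypothesis $\Psi(k)\neq 0$. This says precisely that $\bar\Psi(k)\notin 2\pi\mathbb Z$ for every $k$. Since $\bar\Psi$ is continuous on the connected set $I$ and never meets the discrete set $2\pi\mathbb Z$, it must stay inside a single open band: there is an integer $n$ with $2\pi n<\bar\Psi(k)<2\pi(n+1)$ for all $k\in I$. This confinement is really the heart of the matter, and it is exactly where the nonvanishing assumption is used; without it $\bar\Psi$ could cross a multiple of $2\pi$ and the conclusion would fail.

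Finally I would run the intermediate value theorem on $g(k)=\bar\Phi(k)-\bar\Psi(k)$, which is continuous on $[k_1,k_2]$. From the band estimate, $g(k_1)=2\pi m_1-\bar\Psi(k_1)<2\pi(m_1-n)$ and $g(k_2)=2\pi m_2-\bar\Psi(k_2)>2\pi(m_2-n-1)\geq 2\pi(m_1-n)$, where the last inequality uses $m_2\geq m_1+1$. Hence $g(k_1)<2\pi(m_1-n)<g(k_2)$, so there is some $k\in(k_1,k_2)$ with $g(k)=2\pi(m_1-n)\in 2\pi\mathbb Z$; equivalently $\bar\Phi(k)-\bar\Psi(k)\in 2\pi\mathbb Z$, which is exactly $\Phi(k)=\Psi(k)$ on the circle. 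I expect no serious obstacle beyond correctly tracking the integer shifts $m_1,m_2,n$, which is routine once the confinement of $\bar\Psi$ is in hand.
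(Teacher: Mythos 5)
Your proof is correct and follows essentially the same route as the paper: lift both maps to $\mathbb R$, use strict monotonicity of $\bar\Phi$ to get $m_2\geq m_1+1$, confine $\bar\Psi$ to a band between consecutive multiples of $2\pi$, and conclude by the intermediate value theorem. The only cosmetic difference is that the paper normalizes the lift so that $0<\bar\Psi<2\pi$ (i.e., takes $n=0$), whereas you carry the integer $n$ explicitly; your explicit justification of the band confinement via connectedness is exactly what the paper's phrase ``we can make $0<\bar\Psi(k)<2\pi$'' sweeps under the rug.
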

\begin{proof}
As in the previous lemma, lift $\Phi$ and $\Psi$ to maps
$\bar\Phi,\bar\Psi:I\to \mathbb R$; we can make $0<\bar\Psi(k)<2\pi$
for all $k\in I$. Since $\bar\Phi'>0$ everywhere, we have
$\bar\Phi(k_j)=2\pi m_j$, where $m_1<m_2$ are some integers.  Therefore,
$\bar\Phi(k_1)<2\pi m_1+\bar\Psi(k_1)$ and $\bar\Phi(k_2)>2\pi
m_1+\bar\Psi(k_2)$; it remains to apply the intermediate value
theorem.
\end{proof}

\section{Proofs of the theorems}

We assume in this section that $0<c'_k\leq k\leq C'_k$ for some
constants $c'_k<c_k$ and $C'_k>C_k$; the constants in our estimates
will depend on $c'_k$ and $C'_k$. (We need to expand the interval
$[c_k,C_k]$ a little bit to be able to apply Lemma~\ref{l:perturb}.)

Consider the solutions $u_\pm,u_0,u_1(x,k)$ to the equation
(\ref{e:equation}) in $[0,B]$ with the initial data
$$
\begin{gathered}
u_{\pm 0}(0,k)=1,\
\partial_x u_0(0,k)=0,\
h\partial_xu_\pm(0,k)=\pm k,\\
u_1(B,k)=1,\
\partial_xu_1(B,k)=0.
\end{gathered}
$$
Define $\Theta_0(k)$, $\Theta_\pm(k)$, and $\Theta_1(k)$ to be the
polar angles of vectors $(u,hu_x)$ at $x=A$ for
$u=u_0,u_\pm,u_1$. Then $k>0$ is

\begin{itemize}
\item a Neumann eigenvalue if $u_0$ and $u_1$ are linearly dependent; that is,
(recalling that they solve the same second order ODE) if $2(\Theta_0(k)-\Theta_1(k))=0$;
\item a bound state if $2(\Theta_+(k)-\Theta_1(k))=0$;
\item an antibound state if $2(\Theta_-(k)-\Theta_1(k))=0$.
\end{itemize}
Here we count angles modulo $2\pi$.

To prove Theorem~\ref{l:main}, it suffices to use Lemma~\ref{l:perturb} (for
$\Phi=2(\Theta_0-\Theta_1)$ and $\Psi=2(\Theta_0-\Theta_\pm)$)
together with the following two facts:

\begin{lemm}\label{l:close}
For some constants $C_1$ and $\delta_1>0$ independent of $h$ and $k$,
$$
|2(\Theta_0(k)-\Theta_\pm(k))|\leq C_1e^{-{\delta_1/ h}}\text{ for all }k\in[c'_k,C'_k].
$$
\end{lemm}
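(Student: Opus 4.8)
The plan is to realize the three solutions $u_0,u_+,u_-$ as trajectories confined to a fixed expanding angular sector on $[0,A]$, and then to read the angular differences directly off the Wronskians. Write the equation as $\tilde P(h)u=0$ with $\tilde P(h)=-h^2\partial_x^2+\tilde V$, $\tilde V=V+k^2$. By the bump condition \eqref{e:bump} we have $k^2\le\tilde V(x)$ on $(0,A]$, and piecewise continuity gives $\tilde V\le b^2$ there with $b^2=\sup_{[0,A]}V+(C'_k)^2$. First I would check that, for the choice $a=k$, the initial vectors $(1,0)$, $(1,k)$, $(1,-k)$ of $u_0,u_+,u_-$ all satisfy $W_+\ge0$ and $W_-\ge0$ at $x=0$; here it is essential that $(1,-k)$ lands exactly on the contracting ray $hu_x=-ku$, which is admissible only because the bump condition forces $\tilde V\ge k^2$. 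Lemma~\ref{l:estimate} then confines all three solutions to the sector $-\arctan a\le\theta\le\arctan b$ for all $x\in[0,A]$, keeps $u>0$, and makes $W_\pm$ nondecreasing.

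Next I would use that $u_0,u_\pm$ solve the \emph{same} equation $\tilde P(h)u=0$: by \eqref{e:wronskian} each Wronskian $W(u_i,u_j)$ is $x$-independent, and evaluating at $x=0$ gives $W(u_0,u_+)=k$, $W(u_0,u_-)=-k$. Combined with \eqref{e:wrondef} at $x=A$, this yields the exact identities $\sin(\Theta_+-\Theta_0)=k/(L_0L_+)$ and $\sin(\Theta_--\Theta_0)=-k/(L_0L_-)$, where $L_0,L_\pm$ are the lengths at $x=A$. The whole problem thus reduces to showing that $L_0L_\pm$ are exponentially large. For $u_0$ and $u_+$ this is routine: since $W_-(u)=e^{-ax/h}(au+hu_x)$ is nondecreasing and $au+hu_x\le(a+b)L(u)$ inside the sector, one gets $L(u)\ge(a+b)^{-1}W_-(u)|_{0}\,e^{ax/h}$, and $W_-(u_0)|_0=k$, $W_-(u_+)|_0=2k$ are positive, so $L_0,L_+\ge c\,e^{kA/h}$.

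The hard part is $u_-$, whose defect $W_-(u_-)|_0=a-k=0$ makes the above estimate degenerate: the antibound solution may start tangent to the contracting direction and decay for a while, so this is exactly where the full strength of the bump condition (positivity on \emph{all} of $(0,A]$, not merely near $0$) is needed. I would fix $\alpha\in(0,A)$ and $m>0$ with $V\ge m$ on $[\alpha,A]$ and set $a_1=\sqrt{m+k^2}>k$. On $[0,\alpha]$, confining with $a=k$ and using that $W_+(u_-)=e^{bx/h}(bu_--hu_{-,x})$ is nondecreasing gives the exponentially small but positive bound $u_-(\alpha)\ge e^{-b\alpha/h}$. At $x=\alpha$ the vector $(u_-,hu_{-,x})$ then lies strictly inside the narrower sector attached to $a_1$, with $W_-^{(a_1)}(u_-)|_\alpha\ge(a_1-k)u_-(\alpha)>0$; applying Lemma~\ref{l:estimate} again on $[\alpha,A]$ with the pair $a_1,b$ produces genuine growth, $L_-(A)\ge c\,(a_1-k)e^{(a_1(A-\alpha)-b\alpha)/h}$. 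Choosing $\alpha$ small (which forces $m$, hence $a_1-k$, small but keeps the exponent positive once $\alpha<a_1A/(a_1+b)$) yields $L_-(A)\ge c\,e^{\delta/h}$.

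Finally I would combine the pieces. The two identities give $|\sin(\Theta_\pm-\Theta_0)|\le C e^{-\delta_1/h}$. Because all three angles remain in the fixed sector $[-\arctan a,\arctan b]$, the continuous differences $\Theta_\pm-\Theta_0$ stay in a compact subinterval of $(-\pi,\pi)$ on which $|\phi|\le C|\sin\phi|$; hence $|\Theta_\pm-\Theta_0|\le C'e^{-\delta_1/h}$, and doubling gives the stated bound. All constants $a,b,\alpha,m,C,\delta_1$ can be taken uniform for $k\in[c'_k,C'_k]$ since this is a compact interval of positive numbers, which gives the uniformity claimed. The only genuinely delicate step is the growth of $L_-$, and I expect the choice of $\alpha$ balancing the initial decay against the later expansion to be the main technical obstacle.
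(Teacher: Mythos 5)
Your proposal is correct and follows essentially the same route as the paper's proof: confinement of $u_0,u_\pm$ in a sector via Lemma~\ref{l:estimate}, constancy of the Wronskians $W(u_0,u_\pm)=\pm k$, and a two-stage argument for the degenerate solution $u_-$ that restarts the monotonicity argument at an interior point $\alpha$ where $V$ has a positive lower bound, exactly as the paper does at $x=tA$. The only differences are cosmetic: you lower-bound $u_-$ on $[0,\alpha]$ using monotonicity of $W_+$ together with the sector bound (rather than the Gronwall estimate of Lemma~\ref{l:crude}), you obtain exponential growth of $L_-(A)$ where the paper settles for a constant lower bound, and you spell out the sine-to-angle conversion that the paper leaves implicit.
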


\begin{lemm}\label{l:nondegenerate}
We have $\Theta_0'(k)-\Theta_1'(k)\geq {1/ C_2}> 0$ for all $k\in [c'_k,C'_k]$ and
some constant $C_2$ independent of $h$ and $k$.
\end{lemm}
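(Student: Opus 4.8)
The plan is to differentiate both polar angles by means of Lemma~\ref{l:angle} and then to show that only the contribution coming from $u_0$ requires a genuine lower bound, the contribution of $u_1$ having automatically the favorable sign. Applying Lemma~\ref{l:angle} with $x_0=0,\ x_1=A$ to $u_0$ (whose data at $0$ is independent of $k$) and with $x_0=B,\ x_1=A$ to $u_1$ (whose data at $B$ is independent of $k$), I obtain
\[
\Theta_0'(k)=\frac{2k}{h\,(L(u_0)|_{x=A})^2}\int_0^A u_0(x,k)^2\,dx,\qquad
\Theta_1'(k)=-\frac{2k}{h\,(L(u_1)|_{x=A})^2}\int_A^B u_1(x,k)^2\,dx,
\]
where the minus sign in the second formula is produced by $\int_B^A=-\int_A^B$. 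Since $k>0$, the first expression is nonnegative and the second is nonpositive, so $\Theta_0'(k)-\Theta_1'(k)\geq\Theta_0'(k)$; thus I never need to analyze $V$ on $(A,B)$, and it suffices to bound $\Theta_0'(k)$ from below by a constant independent of $h$ and $k$.

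To estimate $\Theta_0'(k)$, I would bring in the cone estimate of Lemma~\ref{l:estimate}. First I rewrite $(P(h)+k^2)u_0=0$ as $\tilde P(h)u_0=0$ with effective potential $\tilde V=V+k^2$, and I invoke the bump condition~\eqref{e:bump}: on $[0,A]$ one has $V\geq 0$, so $\tilde V\geq k^2\geq (c'_k)^2=:a^2$, while $\tilde V\leq\|V\|_\infty+(C'_k)^2=:b^2$. At $x=0$ the vector $(u_0,hu_{0,x})=(1,0)$ satisfies $bu_0-hu_{0,x}=b>0$ and $au_0+hu_{0,x}=a>0$, i.e.\ $W_+(u_0),W_-(u_0)\geq 0$ there. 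Hence Lemma~\ref{l:estimate} applies on $[0,A]$ and yields both $u_0>0$ and the pointwise cone bound $u_0\geq L(u_0)/\sqrt{1+b^2}$, so that the solution grows monotonically without oscillation.

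Finally I convert this pointwise bound into a lower bound for the integral. Lemma~\ref{l:crude}, applied to $\tilde P(h)$ (for which $|\tilde V|\leq b^2$), gives $L(u_0)|_x\geq e^{-(1+b^2)(A-x)/(2h)}\,L(u_0)|_{x=A}$ for $x\leq A$; squaring, combining with the cone bound $u_0^2\geq L(u_0)^2/(1+b^2)$, and integrating,
\[
\int_0^A u_0^2\,dx\ \geq\ \frac{(L(u_0)|_{x=A})^2}{1+b^2}\int_0^A e^{-(1+b^2)(A-x)/h}\,dx\ \geq\ \frac{h\,(L(u_0)|_{x=A})^2}{2(1+b^2)^2}
\]
once $h$ is small enough that $1-e^{-(1+b^2)A/h}\geq 1/2$. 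Substituting into the formula for $\Theta_0'(k)$ the two factors of $h$ cancel and I obtain $\Theta_0'(k)\geq c'_k/(1+b^2)^2>0$, which proves the claim with $1/C_2=c'_k/(1+b^2)^2$. The crux of the argument, and the step I expect to be the main obstacle, is exactly this cancellation: both $\int_0^A u_0^2$ and $(L(u_0)|_{x=A})^2$ are exponentially large in $1/h$, so the prefactor $1/h$ in Lemma~\ref{l:angle} looks dangerous; the resolution is that an exponentially growing, non-oscillating solution concentrates its $L^2$ mass in a window of width $\sim h$ near $x=A$, producing precisely the compensating factor of $h$.
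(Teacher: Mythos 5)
Your proof is correct and follows essentially the same route as the paper: you apply Lemma~\ref{l:angle} to get $\Theta_1'(k)\leq 0$ and an explicit formula for $\Theta_0'(k)$, then combine the cone bound (\ref{e:ugeq}) from Lemma~\ref{l:estimate} with the crude growth estimate of Lemma~\ref{l:crude} to show that the $L^2$ mass of $u_0$ concentrates in a window of width $\sim h$ near $x=A$, producing exactly the factor of $h$ that cancels the $1/h$ in Lemma~\ref{l:angle}. The only differences are cosmetic: you track explicit constants and re-verify the Wronskian positivity at $x=0$, whereas the paper uses generic constants and reuses the positivity already established in its proof of Lemma~\ref{l:close}.
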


We first prove Lemma~\ref{l:close}. Put $b=\max_{[0,A]}V(x)$,
$k_b=\sqrt{k^2+b}$, $\psi_0(x)=e^{-{kx/ h}}$,
$\psi_+(x)=e^{k_bx/ h}$, and consider the Wronskians
$$
W_+(u)=W(u,\psi_+),\
W_0(u)=W(\psi_0,u).
$$
These are nonnegative for $u=u_0,u_\pm$ at $x=0$.  Then by
Lemma~\ref{l:estimate}, all these six functions are nonnegative and
increasing in $x$ for $0\leq x\leq A$.

Our first goal is to get an exponential lower bound on the length
$L(u)$ for $u=u_0,u_\pm$ at $x=A$. For $u_0$, note that by
(\ref{e:wrondef})
$$
L(u_0)\geq \frac{W(\psi_0,u_0)}{L(\psi_0)}
\geq \frac {W_0(u_0)|_{x=0}}{L(\psi_0)}
\geq\frac 1Ce^{kx/h}.
$$
Same applies to $u_+$. However, $u_-$ needs more careful analysis
since $W_0(u_-)=0$ at $x=0$. For that, take $0<t<1$ and put
$a=\min_{[t A,A]}V(x)>0$, $k_a=\sqrt{k^2+a}$, $\psi_-(x)=e^{-{k_ax/
h}}$, and $W_-(u)=W(\psi_-,u)$. First, we have by Lemma~\ref{l:crude}
$$
L(u_-)\geq e^{-(1+k^2+b)x/(2h)}\cdot L(u_-)|_{x=0}.
$$
Next, $W_0(u_-)\geq 0$ and $W_+(u_-)\geq 0$, so by~(\ref{e:ugeq})
$$
W_-(u_-)\geq (k_a-k)u_-\psi_-\geq \frac 1C L(u_-)\psi_-.
$$
Finally, we apply Lemma~\ref{l:estimate} on the interval $[t A,A]$ to get
$$
L(u_-)|_{x=A}\geq \frac{W_-(u_-)|_{x=tA}}{L(\psi_-)|_{x=A}}
\geq \frac 1Ce^{(k(1-t)-(1+k^2+b)t)A/ h}.
$$
For $t$ small enough and all $k$, $k(1-t)-(1+k^2+b)t\geq 0$,
so we have
$$
L(u_-)|_{x=A}\geq \frac 1C>0.
$$

The next step is to use that $u_0$ and $u_\pm$ solve the same equation
(\ref{e:equation}) and thus $W(u_0,u_\pm)$ is constant in
$x$. Therefore, at $x=A$ we have by~(\ref{e:wrondef})
$$
|\sin(\theta(u_\pm)-\theta(u_0))|=\frac{|W(u_0,u_\pm)|}{L(u_0)L(u_\pm)}\leq Ce^{-{kA/ h}}.
$$
That finishes the proof of Lemma~\ref{l:close}.

To prove Lemma~\ref{l:nondegenerate}, first note that by
Lemma~\ref{l:angle}, $\Theta_1'(k)\leq 0$ and
$$
\Theta_0'(k)\geq \frac 1{ChL(u_0)^2|_{x=A}}\int_0^A |u_0(x,k)|^2\,dx
$$
By (\ref{e:ugeq}), $u_0\geq L(u_0)/C$. Also, by Lemma~\ref{l:crude},
$L(u_0)\geq e^{C(x-A)/ h}L(u_0)|_{x=A}$ for $0\leq x\leq A$; thus
$$
\int_0^A |u_0(x,k)|^2\,dx\geq \frac 1C\int_0^A e^{C(x-A)/ h}(L(u_0)^2|_{x=A})\,dx
\geq \frac hCL(u_0)^2|_{x=A}
$$
and Lemma~\ref{l:nondegenerate} is proven, which finishes the proof of
Theorem~\ref{l:main}.

\smallskip

To prove Theorem~\ref{l:simon}, let
$\Phi_\pm(k)=\theta(u_\pm)|_{x=B}$; a bound state corresponds to
$2\Phi_+=0$ and an antibound state corresponds to $2\Phi_-=0$.  Since
$\theta(u_+)|_{x=0}$ is increasing with $k$, by an argument similar to
the proof of Lemma~\ref{l:angle} we get $\Phi'_+(k)>0$ for all
$k$. Moreover, $2(\Phi_+(k)-\Phi_-(k))$ is never zero, as this would
correspond to $u_+$ and $u_-$ being linearly dependent.  We may now
apply Lemma~\ref{l:calc} with $\Phi=2\Phi_+$ and
$\Psi=2(\Phi_+-\Phi_-)$.

\end{document}